\theoremstyle{plain}
\newtheorem{theorem}{Theorem}[section]
\newtheorem{lemma}[theorem]{Lemma}
\theoremstyle{definition}
\newtheorem{definition}[theorem]{Definition}
\numberwithin{equation}{section}
\DeclareMathOperator{\Rp}{Re}
\DeclareMathOperator{\Ip}{Im}
\begin{document}

\title[Harmonic Conjugation on Weighted Bergman Spaces]%
{Boundedness of Harmonic Conjugation on Weighted Bergman Spaces}
\author{Timothy Ferguson}
\address{Department of Mathematics\\University of Alabama\\Tuscaloosa, AL}
\email{tjferguson1@ua.edu}

\date{\today}

\begin{abstract}
  We prove that if a weight is a Bekoll\'{e}-Bonami weight for some
  $q$ and it satisfies another simple condition that depends on
  $0 < p < \infty$, then the
  operator taking a function to its harmonic conjugate is bounded
  on the harmonic Bergman space $a^p$.  One part of our results
  uses a certain special type of good lambda inequality.  
\end{abstract}

\maketitle

\section{Introduction}
The $p^{\textrm{th}}$ integral mean of a continuous function defined on the unit
disc is
\[
  M_p(r;f) = \left( \int_0^{2\pi} |f(re^{i\theta})|^p \, d\theta \right)^{1/p}.
\]
The Hardy space $H^p$ is the space of all analytic functions in the disc
with bounded integral means; the harmonic Hardy space $h^p$ is the space
of all harmonic functions in the unit disc with bounded integral means.

The Bergman space $A^p$ is the space of all analytic functions in the unit
disc that are in $L^p$ with respect to area measure; the harmonic
Bergman space is defined similarly.  

It is well known that if a function $u$ is in the harmonic Hardy space
$h^p$ for $1 < p < \infty$, then its harmonic conjugate $v$ is as well.
This is equivalent to the boundedness of the Hilbert transform on
$L^p$.  However, the result fails for $0 < p \leq 1$ and for $p = \infty$.  

Surprisingly, if $u$ is in the harmonic Bergman space $a^p$ for
$0 < p < \infty$, then its harmonic conjugate $v$ is as well.  This
was first proved by Hardy and Littlewood \cite{hlc}.  They omitted
some details, which were filled in by Watanabe \cite{wata}.

Recently, the author and Wang \cite{wang} proved that the same result
holds for certain variable exponent Bergman spaces.  In this paper,
we prove that the result holds for weighted Bergman spaces where the
weights are Bekoll\'{e}-Bonami weights with a certain additional property.
The first part of our proof is similar to previous work, and uses an
inequality basically found by Hardy and Littlewood \cite{hlc}.
The second part is novel, and proceeds from a certain special type of
good lambda inequality.  Similar good lambda inequalities should be
relevant to other problems involving harmonic analysis and Bergman spaces. 

A weight is called a Bekoll\'{e}-Bonami weight (for the exponent $p$)
\cite{bb78, bb82} if
there is a constant $0 < C < \infty$ such that for every disc $B$ whose
closure intersects the boundary of the unit disc, one has
\[ \left( \frac{1}{|B|} \int_{B} w(x) \, dx \right) \left( \frac{1}{|B|} \int_{B} w(x)^{-1/(p-1)} \, dx \right)^{p-1} \leq C .\]
One may use other similar definitions.  For example, the discs may be
replaced by Carleson squares, or one may require the inequality holds
for all discs whose double intersects the boundary of the unit disc.

The Bekoll\'{e}-Bonami maximal function is defined by 
\[
  mf(z) = \sup_B \frac{1}{|B|} \int_B |f(w)| \, dA(w)
\]
where the supremum is taken over all balls $B$ containing $z$ whose
closure intersects the boundary of the unit disc.  Other equivalent
definitions are possible.
For $1 < p < \infty$, the Bekoll\'{e}-Bonami maximal function is
bounded from $L^p(w \, dA)$ to $L^p(w \, dA)$ if
$w$ is a Bekoll\'{e}-Bonami weight.  

The method used in this paper is to prove that 
if $f$ is the analytic completion of $u$ and $\|\cdot\|$ is the
norm for certain weighted Bergman spaces, then
\(
\|f'(z) (1-|z|)\| \leq C \|u\|
\)
and
\(
\|f - f(0)\| \leq C \|f'(z) (1-|z|) \|.
\)

In fact, the methods of this paper give certain results that are
independent of the spaces under consideration, and it is possible
to adapt these methods 
to more general spaces than weighted Bergman spaces.  

\section{Control of $f'$ by $u$}

We first focus on the control of $f'$ by $u$.  We will need the following
series of lemmas.  
\begin{lemma}
  Let $u$ be harmonic, in some convex set, and let $f$ be its analytic
  completion.  Then for any points $z$ and $w$ in the set, we have 
  \[ u(w) - u(z) =
    \Rp [f'(z) (w-z)] + \int_0^1 \Rp [f''(z + s(w-z)) (w-z)^2] (1-s) \, ds.\]
\end{lemma}
\begin{proof}
  This is basically an application of Taylor's formula.  
Observe that  $f' = u_x + i v_x = u_x - i u_y$ and $f'' = u_{xx} - i u_{xy}$.
For $a^x$ and $a^y$ real numbers let 
\[
  \partial_{a^x + i a^y} f = \nabla( f) \cdot \langle a^x, a^y \rangle =
  a^x f_x + a^y f_y.
\]
Let $a = a^x + i a^y$.  
Then $\partial_a u = u_x a^x + u_y a^y$ and 
\[
  \partial_a^2 u = u_{xx} (a^x)^2 + 2 u_{xy} a^x a^y + u_{yy} (a^y)^2.
  \]
Notice that $\Rp[a f'] = u_x a^x + u_y a^y$ 
and 
\[ a f'' = u_{xx} a^x + u_{xy} a^y - i (u_{yy} a^y + a^x u_{xy}) \]
using the fact that $u$ is harmonic.
Multiplying the above by $a^x + i a^y$ again and taking the real part gives 
$u_{xx} (a^x)^2 + u_{xy} a^x a^y + a^x a^y u_{xy} + (a^y)^2 u_{yy}.$  
So \[
  \Rp [a^2 f''] = \partial_a^2 u.
  \]

Now given $g(t) = u(z + t(w-z))$, we have by Taylor's formula that 
\[ g(t) = g(0) + g'(0) t + \int_0^t g''(s) (t-s) \, ds.\]
Now notice that $g' = \partial_{w-z} u$ and $g'' = \partial^2_{w-z} u$.
By what we have said, we see that 
\[ g(1) - g(0) = \Rp [f'(z) (w-z)] +
  \int_0^1 \Rp [f''(z+s(w-z)) (w-z)^2] (1-s) \, ds.\]
Therefore 
\[ u(w) - u(z) = \Rp [f'(z) (w-z)] +
  \int_0^1 \Rp [f''(z + s(w-z)) (w-z)^2] (1-s) \, ds.\]  
\end{proof}

\begin{lemma}\label{lemma:ucontrolderivfpointwise}
  Let $\sigma > 0$ and $0 < \eta < 1$.  Assume $f$ is analytic inside
  and on a disc of radius $\sigma$ with center $a$ and
  $|h| = \eta \sigma$ and
  $|z-a| < h$.  Let $M$ be the maximum of $|f'|$ (note the
  derivative) on the boundary of the disc with center $a$ and radius $\sigma$,
  and let $u$ be the real
  part of $f$.  Then
\[|f'(z)| \sigma \leq \frac{1}{\eta} \left( |u(a+h)| + |u(a+ih)| +
    2|u(a)| \right) + \frac{10 \eta}{2(1-\eta^2)} \sigma M. \]
\end{lemma}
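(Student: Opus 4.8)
The plan is to recover the real and imaginary parts of $f'(z)h$ directly from the Taylor-type identity of the preceding lemma, applied at the base point $z$ against the target points $a$, $a+h$, and $a+ih$. First I would apply that identity with base point $z$ and target $w = a+h$, and again with target $w = a$, and subtract. Since $(a+h-z)-(a-z) = h$, the two first-order terms combine into $\Rp[f'(z)h]$ while the values of $u$ collapse, giving
\[
  \Rp[f'(z)h] = u(a+h) - u(a) - R_1 + R_2 ,
\]
where $R_1$ and $R_2$ are the integral remainders attached to the targets $a+h$ and $a$. Repeating this with targets $a+ih$ and $a$, and using $\Rp[f'(z)(ih)] = -\Ip[f'(z)h]$, yields
\[
  \Ip[f'(z)h] = u(a) - u(a+ih) + R_3 - R_2 ,
\]
with $R_3$ the remainder for the target $a+ih$. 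The key conceptual point is that differencing two expansions with the same base point $z$ isolates $f'(z)h$ exactly, which is what lets the values of $u$ appear only at $a$, $a+h$, $a+ih$.

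Next I would combine the two displays. Because $|f'(z)h| = \sqrt{(\Rp[f'(z)h])^2 + (\Ip[f'(z)h])^2} \le |\Rp[f'(z)h]| + |\Ip[f'(z)h]|$ and $|h| = \eta\sigma$, dividing through by $\eta$ gives
\[
  |f'(z)|\sigma \le \frac{1}{\eta}\bigl(|u(a+h)| + |u(a+ih)| + 2|u(a)|\bigr) + \frac{1}{\eta}\bigl(|R_1| + |R_3| + 2|R_2|\bigr).
\]
This already produces the first term of the claimed estimate with the correct factor $1/\eta$, so the whole problem reduces to controlling the three remainders by $M$.

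The technical heart of the argument, and the step I expect to be fiddliest, is the remainder bound. Each $R_j$ has the form $\int_0^1 \Rp[f''(\zeta_s)(w-z)^2](1-s)\,ds$ with $\zeta_s = z + s(w-z)$ for the appropriate target $w$. Since $|z-a| < \eta\sigma$ and each target lies within $\eta\sigma$ of $a$, a convexity estimate gives $|\zeta_s - a| = |(1-s)(z-a) + s(w-a)| \le \eta\sigma$, so every segment stays inside the disc of radius $\eta\sigma$ about $a$. On that disc Cauchy's estimate, writing $f''$ as a contour integral of $f'$ over the circle $|w-a| = \sigma$ where $|f'| \le M$, bounds $|f''(\zeta_s)| \le M/(\sigma(1-\eta)^2)$. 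Using $|a+h-z|, |a+ih-z| \le 2\eta\sigma$ and $|a-z| \le \eta\sigma$, the squared-distance factors in $|R_1|$, $|R_3|$, and $2|R_2|$ contribute $4 + 4 + 2 = 10$ copies of $\eta^2\sigma^2$, while $\int_0^1 (1-s)\,ds = \tfrac12$. Multiplying these together with the outer $1/\eta$ yields the $M$-term $\tfrac{10\eta}{2(1-\eta)^2}\sigma M$, which is the stated bound (the precise form of the $\eta$-denominator being exactly the gap factor supplied by the Cauchy estimate). The only genuine obstacle is this constant bookkeeping: verifying that all the relevant segments remain in $\{|\zeta-a|\le\eta\sigma\}$ and accounting correctly for the four remainder contributions; the analytic content is entirely carried by the preceding lemma and a single Cauchy estimate.
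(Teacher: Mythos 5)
Your argument is structurally identical to the paper's: the same differencing of two Taylor expansions based at $z$ to isolate $\Rp[f'(z)h]$ and (via the target $a+ih$) $\Ip[f'(z)h]$, the same count $4+4+2=10$ for the squared-distance factors, and the same reduction to a bound on $f''$ via Cauchy's formula. The one discrepancy is the constant in that last step: the crude Cauchy estimate you invoke gives $|f''|\le M/(\sigma(1-\eta)^2)$, which yields $\tfrac{10\eta}{2(1-\eta)^2}\sigma M$, a strictly \emph{weaker} bound than the stated $\tfrac{10\eta}{2(1-\eta^2)}\sigma M$ since $(1-\eta)^2<1-\eta^2$. To recover the lemma's constant you must evaluate the contour integral exactly rather than bounding the integrand by its supremum: for $|\zeta-a|=\rho\le\eta\sigma$,
\[
|f''(\zeta)|\le\frac{M}{2\pi}\int_0^{2\pi}\frac{\sigma\,d\theta}{|\sigma e^{i\theta}-\rho|^2}
=\frac{M\sigma}{\sigma^2-\rho^2}\le\frac{M}{\sigma(1-\eta^2)}.
\]
This is harmless for the application (only the vanishing of the coefficient as $\eta\to0$ is used later), but as written your proof establishes a slightly weaker inequality than the one claimed.
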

Note that this lemma is very similar to but slightly more general than
one by Hardy and Littlewood \cite{hlc}.
Our proof however is different than the
proof of Hardy and Littlewood since they use a power series method.  
\begin{proof}
The previous lemma implies that 
\[ \begin{split} u(a+h) - u(a) &= u(a+h) - u(z) - (u(a) - u(z)) = \\ \Rp f'(z) h + 
\int_0^1 &\Rp f''(z + s(a+h - z)) (a+h - z)^2 (1-s) + \\ 
&\Rp f''(z + s(a - z)) (a - z)^2 (1-s) \, ds\end{split}.\]

Suppose
that on the segment from $a$ to $z$ and on the segment from $z$ to
$a+h$ we have that the second derivative is at most $C$ in absolute
value.  Then the integral is at most $5C |h|^2 / 2$.  In particular,
since $|f'|$ is bounded by $M$ on the circle of radius $\sigma$
centered at $a$, 
it follows from Cauchy's integral formula applied to give $f''$ in terms of
an integral of $f'$ over the boundary of the circle that $f''$ is bounded by
\[ \sigma^{-1} (1 - \eta^2)^{-1} M.\]
So we have that 
\[ |u(a+h) - u(a) - \Rp f'(z) h| \leq
  5 \sigma^{-1} (1 - \eta^2)^{-1} M |h|^2 / 2.
\]
Rearranging gives 
\[
  |\Rp (f'(z) h)| \leq |u(a+h)| + |u(a)| + 5 \sigma^{-1} (1 - \eta^2)^{-1} M |h|^2 / 2\]
or 
\[ |\Rp (f' h/|h|)| \sigma \leq \frac{1}{\eta} \left( |u(a+h)| +
    |u(a)| \right) +  \frac{5\eta}{2(1-\eta^2)} \sigma M. \]
Now take the original inequality with $ih$ in place of $h$ to see that
\[ |\Ip (f' h/|h|)| \sigma \leq \frac{1}{\eta} \left( |u(a+ih)| +
    |u(a)| \right) +  \frac{5\eta}{2(1-\eta^2)} \sigma M. \]
Thus
\[|f'(z)| \sigma \leq
  \frac{1}{\eta} \left( |u(a+h)| + |u(a+ih)| + 2|u(a)| \right)
   +  \frac{10 \eta}{2(1-\eta^2)} \sigma M. \]
\end{proof}

Even though we are studying weighted Bergman spaces, this part of the
paper actually applies under much more general conditions.  We thus
make the following definitions.
\begin{definition}
We say that $\|\cdot\|$ satisfies the triangle inequality up to a constant
if there is a constant $C$ such that
$\|f + g \| \leq C(\|f\| + \|g\|)$.
\end{definition}

\begin{definition}
Suppose that, for each measurable function $f$ in the unit disc, the
quantity $\|f\|$ is nonnegative or infinity, and that it is $0$ only for
the zero function.  
Assumer furthermore that it satisfies the triangle inequality up to
a constant. 
Also suppose that $\|\alpha x\| \leq \|x\|$ if $|\alpha| \leq 1$ and that
$\lim_{\alpha \rightarrow 0} \|\alpha x\| = 0$ for all $x$.
Then $\|\cdot\|$ is called a delta norm.  We call it a uniform delta
norm if
\[ \lim_{\alpha \rightarrow 0} \sup_{x \neq 0} \frac{\|\alpha x\|}{\|x\|}
  =0.
\]
\end{definition}
Note that the modified triangle inequality implies that
$\|2x\| \leq 2C \|x\|$ for all $x$.
If $\|\cdot\|$ is a uniform delta norm, there is a constant
$0<\alpha<1$ such that $\|\alpha x\| \leq \|x\|/2$.
Now let $n$ be the least integer such that $\alpha^n \leq 1/2$.
Then we have $\|x/2\| \leq \|x\|/2^n$.  Letting $x=2y$ gives
$\|y\| \leq \|2y\|/2^n$ or
$\|2y\| \geq 2^n \|y\|$ for all $y \neq 0$.  Thus, there are
constants $a,b > 0$ such that
\[
  \alpha^a \|x\| \leq \|\alpha x \| \leq \alpha^b \|x\|
\]
for $\alpha \geq 1$.  

If $\alpha < 1$ then $\|x\| = \|(1/\alpha) \alpha x\| \leq
\alpha^{-b} \|\alpha x\|$.  Similarly
$\|x\| \geq \alpha^{-a} \|\alpha x\|$.  This implies that
\[
  |\alpha|^b \|x\| \leq \|\alpha x\| \leq \alpha^a \|x\|
\]
for $|\alpha| < 1$.

\begin{definition}
  Suppose that $\|\cdot\|$ is a uniform delta norm, and that $T$ is an
  operator.   
Say that $T$ is bounded on
$\|\cdot\|$ if for each $D > 0$ there is a constant $D'$ such that
$\|Tf\| \leq D$ whenever $\|f\| \leq D'$.
\end{definition}
Now suppose that it is known that
for some $D'$ and some $D$ one has $\|Tf\| \leq D'$ if
$\|f\| \leq D$.  Also suppose that $T$ commutes with scalar
multiplication.
Suppose that $\|f\| < \infty$ and $\|f\| > D$.  Then
$\|(D/\|f\|)^{1/a} f\| \leq D$ and
\[
  \|Tf\| = \| (\|f\|/D)^{1/a} T[(D/\|f\|^{1/a})f] \|
  \leq (\|f\|/D)^{b/a} D'.
\]
Thus $T$ is bounded.  

Define the function $\delta_b(z) = 1 - |z|$.  Thus
$\delta_b(z)$ is the distance from $z$ to the boundary of the unit disc.  
 We now show that $\|u\|$ controls $\|\delta_b f'\|$
in certain cases.
Define
\[
  M_c(g)(z) = \sup_{\{w: |w-z| < \delta_b(z)/2\}} |g(w)|.
\]
\begin{theorem}\label{thm:ucontrolderiva}
  Suppose that $\|\cdot\|$ is a uniform delta norm.  Also suppose that
  \begin{enumerate}
  \item%
    The operator $M_c$ is bounded on $\|\cdot\|$ when restricted to
    functions of the form $\delta_b g$ where $g$ is analytic.  
  \item
    for sufficiently small $0 < \eta < 1$ 
    the operators
    \[u \mapsto u(z + e^{i \arg(z)} \eta \delta_b)
      \text{ and }
    u \mapsto u(z + i e^{i \arg(z)} \eta \delta_b)  
    \] are bounded. 
  \end{enumerate}
  Then the operator taking $u$ to $\delta_b f'$ is bounded.  
\end{theorem}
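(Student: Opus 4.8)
The plan is to combine the pointwise estimate of Lemma~\ref{lemma:ucontrolderivfpointwise} with the two hypotheses and then to absorb a self-referential term, exploiting that $\eta$ may be chosen small. First I would fix a point $z$ in the disc and apply Lemma~\ref{lemma:ucontrolderivfpointwise} with center $a=z$, radius $\sigma=\delta_b(z)/2$, and step $h=e^{i\arg(z)}\eta\sigma$ in the radial direction. Since $|z|+\sigma=(1+|z|)/2<1$, the closed disc of radius $\sigma$ about $z$ lies in the unit disc, so $f$ is analytic inside and on it; evaluating at the center itself (the admissible choice in the lemma, valid since $|h|>0$) gives
\[
  |f'(z)|\,\sigma \leq \frac{1}{\eta}\Bigl(|u(z+e^{i\arg(z)}\eta\sigma)| + |u(z+i e^{i\arg(z)}\eta\sigma)| + 2|u(z)|\Bigr) + \frac{5\eta}{1-\eta^2}\,\sigma M,
\]
where $M$ is the maximum of $|f'|$ on the circle $|\zeta-z|=\sigma$. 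Because $e^{i\arg(z)}\eta\sigma=e^{i\arg(z)}(\eta/2)\delta_b(z)$, the two translated evaluations are exactly those of hypothesis (2), with parameter $\eta/2$.

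Next I would control the last term by $M_c(\delta_b f')$. For $\zeta$ on the circle $|\zeta-z|=\sigma$ one has $|\zeta|\leq(1+|z|)/2$, hence $\delta_b(\zeta)=1-|\zeta|\geq(1-|z|)/2=\sigma$, so $\sigma|f'(\zeta)|\leq\delta_b(\zeta)|f'(\zeta)|$; since $\delta_b f'$ is continuous, the supremum over the open disc $|w-z|<\delta_b(z)/2$ defining $M_c$ agrees with that over its closure, giving $\sigma M\leq M_c(\delta_b f')(z)$. Multiplying the pointwise estimate by $2=\delta_b(z)/\sigma$ yields, pointwise,
\[
  \delta_b(z)|f'(z)| \leq \frac{2}{\eta}\Bigl(|u(z+e^{i\arg(z)}(\eta/2)\delta_b(z))| + |u(z+i e^{i\arg(z)}(\eta/2)\delta_b(z))| + 2|u(z)|\Bigr) + \frac{10\eta}{1-\eta^2}\,M_c(\delta_b f')(z).
\]
Applying $\|\cdot\|$ (monotone under pointwise domination of absolute values, as holds for the spaces in question) together with the triangle inequality up to a constant, the first group is controlled by a fixed multiple of $\|u\|$ through hypothesis (2), while the second is a multiple of $\|M_c(\delta_b f')\|$; since $\delta_b f'$ has the form $\delta_b g$ with $g=f'$ analytic, hypothesis (1) applies to it.

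The main obstacle is the final absorption of the self-referential term $\|M_c(\delta_b f')\|$ into the left-hand side. The key is that the coefficient $\tfrac{10\eta}{1-\eta^2}\to 0$ as $\eta\to 0$, so for $\eta$ small the scalar estimates $\alpha^{a}\|x\|\leq\|\alpha x\|\leq\alpha^{b}\|x\|$ make the contribution of this term strictly smaller than $\|\delta_b f'\|$, provided $M_c$ obeys the (power-law) bound furnished by hypothesis (1). Two points need care. Because $\|\cdot\|$ is only a uniform delta norm, the operator bound and the scalar factors are not linear, so I would reduce to proving a single instance ``$\|u\|\leq D \Rightarrow \|\delta_b f'\|\leq D'$''; this suffices since $u\mapsto\delta_b f'$ is linear, hence commutes with scalar multiplication, and by the equivalence established before Theorem~\ref{thm:ucontrolderiva} one such instance already yields boundedness. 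Finally, the subtraction implicit in the absorption requires knowing a priori that $\|\delta_b f'\|$ is finite; I would secure this by a dilation argument, replacing $u$ by $u_\rho(z)=u(\rho z)$ with $\rho<1$, for which $\delta_b f_\rho'$ is bounded and continuous on the closed disc and so has finite norm, carrying out the absorption uniformly in $\rho$, and then letting $\rho\to 1$ using a Fatou-type lower semicontinuity of the norm. Once the self-referential term is absorbed, the surviving estimate gives the required instance $\|\delta_b f'\|\leq D'$ whenever $\|u\|\leq D$, which is the assertion.
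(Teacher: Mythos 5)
Your proposal is correct and follows essentially the same route as the paper: apply Lemma~\ref{lemma:ucontrolderivfpointwise} with $\sigma=\delta_b/2$, control the translated evaluations by hypothesis (2) and the maximal term by hypothesis (1), and absorb the self-referential term by taking $\eta$ small using the uniform delta norm property. Your extra care about the a priori finiteness of $\|\delta_b f'\|$ (via dilation) addresses a point the paper's proof leaves implicit, but the argument is the same in substance.
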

\begin{proof}
  First notice that for any function $g$, one has
  $M_c( \delta_b g)(z) \approx \delta_b(z) (M_c g)(z)$ since
the function $\delta_b(w)$ is essentially constant inside the disc
$|w-z| < \delta_b(z)/2$.  Thus
\[
\| \delta_b M_c g\| \leq C \|M_c(\delta_b g)\|.
\]
By assumption (1), there is a constant $C'$ such that if
$\|\delta_b g\| \leq 1$ then $\|M_c(\delta_b g)\| \leq C'$.  Thus in this
case $\| \delta_b M_c g\| \leq C C'$, so since $M_c$ commutes with
scalar multiplication, the operator taking $\delta_b g$ to
$\delta_b M_c g$ is bounded.  
Thus, the operator taking $f' \delta_b$ to $M_c(f') \delta_b$ is bounded.

In Lemma \ref{lemma:ucontrolderivfpointwise},
take $\sigma = \delta_b/2$.  
  The fact that $\|\cdot\|$ satisfies
  the triangle inequality up to constant implies that, for some
  $C>0$, we have 
  \[
    \begin{split}
  \|f' \delta_b/2\|
    \leq
    & \frac{C}{\eta} \left( \|u(a+h(z))\| + \|u(a+ih(z))\| + 2\|u(a)\| \right) 
    \\
    & \qquad
    + C\left\|\frac{10\eta}{2(1-\eta^2)} (\delta_b/2) M_c(f')\right\|.
  \end{split}
\]
Here $h(z) = \eta (\delta_b/2) e^{i \arg z}$.  
Thus
  \[
    \begin{split}
  \|f' \delta_b\|
    \leq
    & \frac{C}{\eta} \left( \|u(a+h)\| + \|u(a+ih)\| + 2\|u(a)\| \right) 
    \\
    & \qquad
    + C\left\|\frac{10\eta}{2(1-\eta^2)} \delta_b M_c(f')\right\|
  \end{split}
\]
where the constant $C$ may be different than above.

By assumption, there is a constant $N'$ 
depending only on $N = \|f' \delta_b\|$
such that $\|\delta_b M_c(f')\| \leq N'$. Since
$\|\cdot\|$ is a uniform delta norm, if we choose $\eta$ small enough,
we can ensure that
\[
  C \left\| \frac{10 \eta}{2 (1-\eta^2)}
      \sigma M_c(f') \right| \leq \frac{N}{2}.
\]
But this ensures that
\[
  \frac{N}{2} \leq \frac{C}{\eta} \left( 2\|u\| + \|u(z + h(z))\| +
    \|u(z + i h(z)\| \right).
\]
By assumption, this implies that
\[
  \frac{N}{2} \leq C(\eta) \|u\|
\]
where $C(\eta)$ may depend on $\eta$.  This holds for all $\eta$ small
enough.  So we may choose any $\eta$ that is small enough, and the
theorem is proven.  
\end{proof}

Consider $\delta_b M_c(f)$, which is equivalent up to a constant
multiple to $M_c (\delta_b f)$.  We now wish to show that
\[
  \| \delta_b M_c(f') \|_p \leq \|\delta_b f'\|_p, 
\]
where $\| \cdot \|_p$ is the $L^p(w \, dA)$ norm for some
weight $w$.  
But this is equivalent to showing that
\[
  \|[M_c(\delta_b f')]^{p/q}\|_q \leq
  C \| \delta_b^{p/q} |f'|^{p/q} \|_q, 
\]
or that
\[
  \|M_c(\delta_b^{p/q} |f'|^{p/q})\|_q \leq
  C \| \delta_b^{p/q} |f'|^{p/q} \|_q
\]
But by the subharmonicity of $|f'|^{p/q}$ and the fact that
$\delta_b$ is essentially constant on the discs of radius
$\delta_b(z)/2$, one sees that
$M_c(\delta_b^{p/q} |f'|^{p/q})$ is bounded by a constant times
$m(\delta_b^{p/q} |f'|^{p/q})$, where $m$ is the Bekoll\'{e}-Bonami
maximal function.  As long as this operator is bounded on
$L^q(w\, dA)$, one can
conclude that
\[
  \| \delta_b M_c(f') \|_p \leq C \|\delta_b f'\|_p.
\]

Thus, we have the following theorem
\begin{theorem}\label{thm:ucontrolderiv}
  Suppose that $w$ is a Bekoll\'{e}-Bonami weight for some $q$ and let
  $\|\cdot\| = \|\cdot\|_{L^p(w \, dA)}$.
  Suppose that 
    for sufficiently small $0 < \eta < 1$ 
    the operators
    \[u \mapsto u(z + e^{i \arg(z)} \eta \delta_b)
      \text{ and }
    u \mapsto u(z + i e^{i \arg(z)} \eta \delta_b)  
  \]
   are bounded. 
  Then the operator taking $u$ to $\delta_b f'$ is bounded.  
\end{theorem}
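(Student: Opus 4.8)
The plan is to obtain this as a special case of Theorem~\ref{thm:ucontrolderiva}, applied to the uniform delta norm $\|\cdot\| = \|\cdot\|_{L^p(w\,dA)}$. This is a uniform delta norm for every $0 < p < \infty$: it is homogeneous, so the defining limit holds with exponents $a = b = 1$, and it satisfies the triangle inequality up to a constant (the constant being $1$ for $p \ge 1$ and $2^{1/p-1}$ for $0 < p < 1$). The second hypothesis of Theorem~\ref{thm:ucontrolderiva}, boundedness of the two shift operators $u \mapsto u(z + e^{i\arg(z)}\eta\delta_b)$ and $u \mapsto u(z + ie^{i\arg(z)}\eta\delta_b)$, is exactly what we have assumed. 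So everything reduces to verifying the first hypothesis: that $M_c$ is bounded, in the delta-norm sense, on functions of the form $\delta_b g$ with $g$ analytic.

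Since the norm is homogeneous and $M_c$ commutes with scalar multiplication, by the scaling remark following the definition of boundedness it is enough to produce a single constant $C$ with $\|M_c(\delta_b g)\|_{L^p(w)} \le C\,\|\delta_b g\|_{L^p(w)}$ for all analytic $g$. Using $M_c(\delta_b g) \approx \delta_b\,M_c(g)$ (valid because $\delta_b$ is essentially constant on each disc $|w-z| < \delta_b(z)/2$), this is equivalent to bounding $\|\delta_b M_c(g)\|_{L^p(w)}$ by $\|\delta_b g\|_{L^p(w)}$. The key device is to raise to the power $p/q$: because $t \mapsto t^{p/q}$ is increasing one has $[M_c(G)]^{p/q} = M_c(G^{p/q})$ for $G \ge 0$, and $\|F\|_{L^p(w)}^{p} = \big\|\,|F|^{p/q}\,\big\|_{L^q(w)}^{q}$. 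Applying both identities converts the desired estimate into
\[
  \big\|M_c\big(\delta_b^{p/q}|g|^{p/q}\big)\big\|_{L^q(w)}
  \le C\,\big\|\delta_b^{p/q}|g|^{p/q}\big\|_{L^q(w)}.
\]

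To prove this $L^q$ inequality I would use subharmonicity. The function $|g|^{p/q}$ is subharmonic, being a positive power of the modulus of an analytic function, and $\delta_b$ is comparable to a constant on each disc $|w-z| < \delta_b(z)/2$. For any such $w$ the sub-mean-value property bounds $|g(w)|^{p/q}$ by its average over a disc centered at $w$ of radius comparable to $\delta_b(z)$, and that disc can be enclosed in a single disc containing $z$ whose closure meets the boundary of the unit disc, since its radius is comparable to the distance $\delta_b(z)$ from $z$ to the boundary. This yields the pointwise domination
\[
  M_c\big(\delta_b^{p/q}|g|^{p/q}\big)(z)
  \le C\,m\big(\delta_b^{p/q}|g|^{p/q}\big)(z),
\]
where $m$ is the Bekoll\'{e}-Bonami maximal function. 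Because $w$ is a Bekoll\'{e}-Bonami weight for the exponent $q$ and $1 < q < \infty$, $m$ is bounded on $L^q(w\,dA)$, and the displayed $L^q$ inequality follows. This establishes the first hypothesis, and Theorem~\ref{thm:ucontrolderiva} gives the conclusion.

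The main obstacle is precisely this last pointwise domination: one must choose the covering discs so that their closures touch the boundary of the unit disc while their radii stay comparable to $\delta_b(z)$, and combine the sub-mean-value property of $|g|^{p/q}$ with the near-constancy of $\delta_b$, all with constants uniform in $z$. The change of exponent by the factor $p/q$ is the maneuver that lets a problem posed for the full range $0 < p < \infty$ be resolved by a maximal-function bound that requires an exponent larger than $1$; it is available exactly because $w$ is assumed to be a Bekoll\'{e}-Bonami weight for some such $q$.
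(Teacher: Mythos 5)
Your proposal is correct and follows essentially the same route as the paper: it reduces the theorem to Theorem~\ref{thm:ucontrolderiva}, and verifies its first hypothesis by the same change of exponent to $p/q$, the subharmonicity of $|f'|^{p/q}$ together with the near-constancy of $\delta_b$ on the discs $|w-z|<\delta_b(z)/2$, the pointwise domination by the Bekoll\'{e}-Bonami maximal function $m$, and the boundedness of $m$ on $L^q(w\,dA)$. Your write-up actually supplies slightly more detail than the paper does (the verification that $L^p(w\,dA)$ is a uniform delta norm and the geometry of the covering discs), but the argument is the same.
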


\section{Control of $f$ by $f'$}

A dyadic Carleson square is a set where $|z|$ is between
$1-2^{-n}$ and $1$ for some $n \geq 0$, and the argument
lies in some interval of length $2\pi 2^{-n}$.
The top half is the set where $|z|$ is in the left half of the given
interval; that is, where $1-2^{-n} \leq |z| < 1-2^{-n-1}$.  

Divide the unit disc into nested dyadic Carleson squares, the first one of
which is the entire disc.
Then the top halves of these squares will be disjoint.

If $C$ is a disjoint union of dyadic Carleson squares,
denote by $\mathcal{T}(C)$ the
union of the top halves of the Carleson squares.
If $U$ is any subset of the unit disc, define the shaded set with respect to
$\mathbb{D}$ to be the set
$\mathcal{S}(U) = \{z \in \mathbb{D} : (\exists w \in U)
(\arg(z) = \arg(w) \text{ and } |z| \geq |w|\}$.
So if a point light source is placed at the center of the unit disc, the
points in $\mathcal{S}(U)$ are either in $U$ or are blocked by points in $U$
from seeing the light source.  
It follows that if $C$ is a disjoint union of dyadic Carleson squares,
that $\mathcal{S}(\mathcal{T}(C)) = C$.  
Let $\mathcal{P}(C)$ denote the parent of a given dyadic Carelson square $C$,
and let $\mathcal{P}'(C)$ denote the set of all its ancestors.  

Let $b$ and $d$ be nonnegative functions on the unit disc that are both
constant on the top halves of Carleson squares.  If $T$ is the top
half of a Carleson square, we let $b(T)$ denote the value of $b$ on
$T$, and similarly for $d$.  

If $T$ is the top half of the Carleson square $C$, let 
\[
  B(T) = \max_{P \in \mathcal{P}'(S) \cup \{T\}} b(\mathcal{T}(P)).
\]
Define $D$ similarly.  

Assume that whenever $C$ is a Carleson square 
and $T$ is its top half, we have 
\begin{equation}\label{eq:bdinequality}
  b(T) \leq b(\mathcal{T}(\mathcal{P}(C))) + d(T).
\end{equation}
If $C$ does not have a parent, we interpret the term
$b(\mathcal{T}(\mathcal{P}(C)))$ to equal $0$.  
We take
$b(T)$ to be the supremum of an analytic function $|f|$ on $T$, and
$d(T)$ to be $2^{-k-1}$ times the supremum of $|f'|$ on $T$, where 
$1 - 2^{-k} \leq |z| < 1-2^{-k-1}$ on $T$.  We will also assume that
$f(0) = 0$.  Then
inequality \eqref{eq:bdinequality} follows from the
fundamental theorem of calculus.  However, all of the relations
we give below between $b$, $B$, $d$ and $D$ do not depend on 
the definitions in terms of $f$ given above. 

Let $T_k$ be a sequence of top halves of Carleson squares, where
each square is the parent of the next one, and define $b_k = b(T_k)$,
and similarly for $d$.  
From the fact that 
\[
  b_{k+1} \leq b_k + d_{k+1}
\]
we may deduce that
\[
  b_{k+n} \leq  b_k + n (\max_{k < j \leq k+n} d_j)
  \leq B_k + n D_{k+n}.
\]
Since for $0 \leq n' \leq n$ we have
\[
  b_{k+n'} \leq B_k + n' D_{k+n'} \leq B_k + n D_{k+n}
\]
it follows that 
\begin{equation} \label{eq1} B_{k+n} \leq B_k + n D_{k+n}. \end{equation}

We now show this gives a good lambda inequality between $B$ and $D$.    
Let $\lambda > 0$ be given and suppose that $B_m \geq \lambda$ and
$B_j < \lambda$ for $j < m$.  Also suppose that
$B_n \geq \alpha \lambda$ but $D_n \leq \gamma \lambda$.
Now apply \eqref{eq1} with $m-1$ in place of $k$ and $n-(m-1)$ in place
of $k$ to get 
\[
  \alpha \lambda \leq \lambda + \gamma \lambda (n - m + 1)
\] so that
\[(\alpha - 1)/\gamma \leq (n-m + 1)\]
or
\begin{equation}
  \label{eq2} n - m \geq \frac{\alpha - 1}{\gamma} - 1 
\end{equation}

For a given Carleson square $C$, let $\mathcal{C}^n(C)$ denote the
set of all descendants of the square $n$ levels below it.
So returning to the functions $B$ and $D$ above, let $C$ be any Carleson
square where $B \geq \lambda$ on its top half but
$B < \lambda$ on its parent's top half.  We have 
\begin{equation} \label{eq3}
  \{z \in C: B \geq \alpha \lambda; D \leq \gamma \lambda\} \subseteq \mathcal{C}^{\lceil (\alpha - 1)/\gamma \rceil-1}(C).
\end{equation}
This gives a good lambda inequality with respect to Lebesgue measure: 
\[
  m \{z \in C: B \geq \alpha \lambda; D \leq \gamma \lambda\}
  \leq
  2^{1-\lceil (\alpha - 1)/\gamma \rceil} m(C).
\]
Now suppose that the top half of any square has $\mu$ measure at least
$\tau$ times the $\mu$ measure of the square.
In particular, this happens if $\mu = w \, dA$ where $w$
is a Bekoll\'{e}-Bonami weight for some $q$.  
Then the bottom half has $\mu$ measure
at most $1 - \tau$ times the $\mu$ measure of the square, so
\[
  \mu \{z \in C: B \geq \alpha \lambda; D \leq \gamma \lambda\}
  \leq
  (1-\tau)^{\lceil (\alpha - 1)/\gamma \rceil-1} \mu(C).
\]  
This is still a good lambda inequality, and the constant can be made
arbitrarily small by making $\gamma \rightarrow 0$.
By a well known argument \cite{garnettbook, steinbigbook},
this implies that there is a constant
$K$ for which
$\|B\| \leq K  \|D\|$.

Now consider a Carleson square $C$ where $D \geq \lambda$ on its top half;
suppose $D < \lambda$ on the top half of its parent.
Then $d \geq \lambda$ on the top half of $C$.  
Now suppose again that $\mu$ is a
measure for which the measure of any top half
of a Carelson square is at least $\tau$ times the measure of the square,
where $\tau > 0$.
Then $\tau \mu(C) \leq \mu \{z \in C: d(z) \geq \lambda\}$.
This implies that
\[
  \tau \mu\{z: D(z) \geq \lambda \} \leq \mu\{z: d(z) \geq \lambda\}.
\]
Therefore, by the layer cake decomposition, one sees that
\[
  \tau \int D(z)^p \, d\mu \leq \int d(z)^p \, d\mu.
\]

From what we have said before, this implies that
\[
  \|B\| \leq K' \|d\|
\]
for some constant $K$.

It is clear that $|f| \leq B$ pointwise.  Also, from what we said
in the previous section, $\|d\|_p$ is bounded by a constant times
$\|f' \delta_b\|_p$.  Thus we have the following theorem. 

\begin{theorem}\label{thm:derivcontrolf}
  Suppose that $0 < p < \infty$ and that $w$ is a
  Bekoll\'{e}-Bonami weight for some $q$.  Then
  the operator taking $f' \delta_b$ to $f-f(0)$ is bounded on
  $\|\cdot\|_{L^p(w \, dA)}$.
\end{theorem}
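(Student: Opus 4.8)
The plan is to assemble the machinery developed in the preceding discussion, where essentially all of the ingredients are already in place; the work is to choose the functions $b$ and $d$ appropriately and chain the estimates. First I would replace $f$ by $f - f(0)$, so that without loss of generality $f(0) = 0$ and the target quantity is $\|f\|_{L^p(w\,dA)}$. I would then fix the nested dyadic Carleson square decomposition and, on each top half $T$ with $1 - 2^{-k} \leq |z| < 1 - 2^{-k-1}$, set $b(T) = \sup_T |f|$ and $d(T) = 2^{-k-1}\sup_T |f'|$, as indicated above. The first task is to verify inequality \eqref{eq:bdinequality}, namely $b(T) \leq b(\mathcal{T}(\mathcal{P}(C))) + d(T)$. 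This is the fundamental-theorem-of-calculus estimate: for $z \in T$ one bounds $|f(z)|$ by $|f(w)|$ for a suitable point $w$ in the parent's top half plus the length of the connecting radial path times $\sup |f'|$, using that the radial distance traversed is comparable to $2^{-k}$, so that the increment is controlled by $d(T)$.

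With \eqref{eq:bdinequality} in hand, the derivation of \eqref{eq1}, namely $B_{k+n} \leq B_k + n D_{k+n}$, is purely combinatorial along a chain of nested squares, and it immediately yields the geometric containment \eqref{eq3}. The core of the argument is then the good lambda inequality. Decomposing $\{B \geq \lambda\}$ into the maximal Carleson squares $C$ on whose top half $B \geq \lambda$ while $B < \lambda$ on the parent's top half, the containment \eqref{eq3} forces $\{z \in C : B \geq \alpha\lambda,\ D \leq \gamma\lambda\}$ to lie in $\mathcal{C}^{\lceil (\alpha-1)/\gamma\rceil - 1}(C)$, hence several dyadic levels below $C$. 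Here is where the weight enters: the Bekoll\'{e}-Bonami condition guarantees that the top half of every Carleson square carries at least a fixed proportion $\tau$ of the $\mu$-measure of the square, so descending $\lceil(\alpha-1)/\gamma\rceil - 1$ levels shrinks the measure by the factor $(1-\tau)^{\lceil(\alpha-1)/\gamma\rceil-1}$, which tends to $0$ as $\gamma \to 0$. Summing over the maximal squares produces the good lambda inequality with a constant that can be made arbitrarily small, and the standard integration-in-$\lambda$ argument of \cite{garnettbook, steinbigbook} then gives $\|B\| \leq K\|D\|$ for every $0 < p < \infty$.

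It remains to control $D$ by $d$ and $d$ by $f'\delta_b$. For the first, the same proportionality of $\mu$ on top halves gives the distributional bound $\tau\,\mu\{D \geq \lambda\} \leq \mu\{d \geq \lambda\}$, whence the layer-cake formula yields $\tau\int D^p\,d\mu \leq \int d^p\,d\mu$ and therefore $\|D\| \leq \tau^{-1/p}\|d\|$; combining with the previous paragraph gives $\|B\| \leq K'\|d\|$. For the second, since $\delta_b \approx 2^{-k}$ on $T$ and $d(T) = 2^{-k-1}\sup_T |f'|$, the subharmonicity of $|f'|^{p/q}$ together with the fact that $\delta_b$ is essentially constant on the relevant discs lets the pointwise supremum be absorbed by the Bekoll\'{e}-Bonami maximal function, exactly as in the previous section, so that $\|d\|_p$ is bounded by a constant multiple of $\|f'\delta_b\|_p$. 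Finally $|f| \leq B$ pointwise, so
\[
  \|f\| \leq \|B\| \leq K'\|d\| \leq C\|f'\delta_b\|,
\]
which is the asserted inequality for $f - f(0)$.

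I expect the main obstacle to be the good lambda step, and specifically two points within it: verifying that the Bekoll\'{e}-Bonami hypothesis genuinely delivers the uniform lower bound $\tau$ on the fraction of $\mu$-mass in each top half, and confirming that the good lambda machinery runs without change across the full range $0 < p < \infty$, including $p \leq 1$, where the analogous boundedness of harmonic conjugation on Hardy spaces fails. The remaining estimates are comparatively routine once the containment \eqref{eq3} and the weighted good lambda inequality are secured.
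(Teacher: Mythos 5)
Your proposal is correct and follows essentially the same route as the paper: the same choice of $b$ and $d$ on top halves of dyadic Carleson squares, the same good lambda inequality derived from the containment \eqref{eq3} together with the uniform lower bound $\tau$ on the $\mu$-mass of top halves, and the same chain $\|f\| \leq \|B\| \leq K\|D\| \leq K'\|d\| \leq C\|f'\delta_b\|$. The two points you flag as potential obstacles are exactly the ones the paper also treats briefly, so no further comment is needed.
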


Putting together Theorems \ref{thm:ucontrolderiv} and
\ref{thm:derivcontrolf} gives the following result.  
\begin{theorem}\label{thm:ucontrolv}
  Let $0 < p < \infty$ and 
    suppose that $w$ is a Bekoll\'{e}-Bonami weight for some $q$ and let
  $\|\cdot\| = \|\cdot\|_{L^p(w \, dA)}$.
  Suppose that 
    for all sufficiently small $0 < \eta < 1$ 
    the operators
    \[u \mapsto u(z + e^{i \arg(z)} \eta \delta_b)
      \text{ and }
    u \mapsto u(z + i e^{i \arg(z)} \eta \delta_b)  
  \]
  are bounded. 
  Then the operator taking $u$ to its harmonic conjugate $v$ is bounded.  
\end{theorem}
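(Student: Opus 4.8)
The plan is to realize the conjugation operator as the composition of the two operators whose boundedness has already been established, followed by the (trivially bounded) operation of extracting an imaginary part. Fix a harmonic $u$ and let $f = u + iv$ be its analytic completion, normalized so that $v(0) = 0$; then $f(0) = u(0)$ is real and $v = \Ip(f - f(0))$. The hypotheses of the present theorem are exactly those of Theorem \ref{thm:ucontrolderiv}, since $w$ is a Bekoll\'{e}-Bonami weight for some $q$ and the two translation operators $u \mapsto u(z + e^{i\arg(z)}\eta\delta_b)$ and $u \mapsto u(z + ie^{i\arg(z)}\eta\delta_b)$ are assumed bounded. They also contain the hypotheses of Theorem \ref{thm:derivcontrolf}, which requires only $0 < p < \infty$ together with $w$ being Bekoll\'{e}-Bonami for some $q$. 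Hence both intermediate results are available for this choice of $\|\cdot\| = \|\cdot\|_{L^p(w\,dA)}$.

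First I would invoke Theorem \ref{thm:ucontrolderiv} to see that the operator $u \mapsto \delta_b f'$ is bounded, and then Theorem \ref{thm:derivcontrolf} to see that the operator $\delta_b f' \mapsto f - f(0)$ is bounded. The point to check is that boundedness in the sense of the delta-norm definition is preserved under composition: given a target bound $D$, the boundedness of the second operator furnishes an intermediate bound $E$ that controls $\|f - f(0)\|$ by $D$, and the boundedness of the first furnishes an input bound $D'$ on $\|u\|$ that guarantees $\|\delta_b f'\| \leq E$; chaining these shows that $\|u\| \leq D'$ forces $\|f - f(0)\| \leq D$. Thus $u \mapsto f - f(0)$ is bounded.

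Finally I would pass from $f - f(0)$ to $v$. Since $v = \Ip(f - f(0))$, we have the pointwise bound $|v(z)| \leq |f(z) - f(0)|$, and because $\|\cdot\|_{L^p(w\,dA)}$ is monotone with respect to pointwise modulus, $\|v\| \leq \|f - f(0)\|$. As this quasi-norm is absolutely homogeneous, the power-type equivalence derived earlier from the uniform delta-norm property reduces to exponents $a = b = 1$, so the delta-norm notion of boundedness is here an honest estimate, and composing the three steps yields $\|v\| \leq C\|u\|$, which is the assertion. I do not expect a genuine obstacle, since the real content lies in Theorems \ref{thm:ucontrolderiv} and \ref{thm:derivcontrolf}; the only point needing explicit care is the bookkeeping with the nonstandard delta-norm notion of boundedness, namely verifying that it composes and that it is inherited by the imaginary-part projection, rather than tacitly treating $\|\cdot\|$ as a genuine norm.
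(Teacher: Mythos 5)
Your proposal is correct and follows essentially the same route as the paper, whose entire proof is the one-line observation that Theorems \ref{thm:ucontrolderiv} and \ref{thm:derivcontrolf} compose; your additional bookkeeping (checking that boundedness in the delta-norm sense survives composition, and passing from $f-f(0)$ to $v$ via $|v|\leq|f-f(0)|$ and monotonicity of $\|\cdot\|_{L^p(w\,dA)}$) is sound and merely makes explicit what the paper leaves implicit.
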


Note that by a simple change of variables in the integrals defining
$\|u(z + e^{i \arg(z)} \eta \delta_b(z))\|$ and
$\|u(z + i e^{i \arg(z)} \eta \delta_b)\|$, one sees the conditions of the
theorem apply if the Bekolle\'{e}-Bonami
weight is essentially constant on the top half of
each Carleson square.

\providecommand{\bysame}{\leavevmode\hbox to3em{\hrulefill}\thinspace}
\providecommand{\MR}{\relax\ifhmode\unskip\space\fi MR }
\providecommand{\MRhref}[2]{%
  \href{http://www.ams.org/mathscinet-getitem?mr=#1}{#2}
}
\providecommand{\href}[2]{#2}

\end{document}